\tikzstyle{vertex}=[circle, draw, inner sep=0pt, minimum size=6pt]
\theoremstyle{plain}
\newtheorem{thm}{Theorem}[section]
\newtheorem{cor}[thm]{Corollary}
\theoremstyle{definition}
\theoremstyle{remark} \tolerance=10000 \hbadness=10000
\def \ni{\noindent}
\author{
	Jismy Varghese \footnote{E-mail : kvjismy@gmail.com}\\ School of Computer Science\\
	DePaul Institute of Science and Technology\\ Angamaly - 683 573\\ \vspace{0.2cm} Kerala, India. \and
	Anu V.\footnote{E-mail : anusaji1980@gmail.com}\\ Department of Mathematics\\
	St. Peter's College\\ Kolenchery -
	682 311\\  Kerala, India.\\ \and
Aparna Lakshmanan S.	\footnote{E-mail : aparnaren@gmail.com}\\
	Department of Mathematics\\
	St. Xavier's College for Women\\Aluva -
	683 101\\\vspace{0.2cm} Kerala, India.}
\title{\textbf{Italian Domination and Perfect Italian Domination on Sierpi\'{n}ski Graphs}}
\date{}
\begin{document}
	
	\maketitle
	
	\begin{abstract}
			 \ni An Italian dominating function (IDF) of a graph G is a function $ f: V(G) \rightarrow \{0,1,2\} $ satisfying the condition that for every $ v\in V $ with $ f(v) = 0$, $\sum_{ u\in N(v)} f(u) \geq 2. $ The weight of an IDF on $G$ is the sum $ f(V)= \sum_{v\in V}f(v) $ and the Italian domination number, $ \gamma_I(G) $, is the minimum weight of an IDF. An IDF is a perfect Italian dominating function (PID) on $G$, if for every vertex $ v \in V(G) $ with $ f(v) = 0 $ the total weight assigned by $f$ to the neighbours of $ v $ is exactly 2, i.e., all the neighbours of $u$ are assigned the weight 0 by $f$ except for exactly one vertex $v$ for which $ f(v) = 2 $ or  for exactly two vertices $v$ and $w$ for which $ f(v) = f(w) = 1 $. The weight of a PID- function is $f(V)=\sum_{u \in V(G)}f(u)$. The perfect Italian domination number of $G$, denoted by $ \gamma^{p}_{I}(G), $ is the minimum weight of a PID-function of $G$. In this paper we obtain the  Italian domination number and perfect Italian domination number of Sierpi\'{n}ski graphs.
		   \\
		
		\ni {\bf Keywords:} Italian Domination, Perfect Italian Domination, Sierpi\'{n}ski graph.  \\
		
		\ni {\bf AMS Subject Classification:}   05C69, 05C76.\\
	\end{abstract}

\section{Introduction}

Let $G$ be a simple graph with vertex set $V(G)$ and edge set $ E(G) $. If there is no ambiguity in the choice of $G$, then we
write $V(G)$ and $E(G)$ as $V$ and $E$ respectively. The number of vertices and edges of the graph $G$ is denoted by $n(G)$ and $m(G)$ respectively. The open
neighbourhood of  a vertex $v\in V$ is the set $N(v)=\{u: uv \in
E\}$ and the vertices in $N(v)$ are called the neighbours of $v$. $|N(v)|$ is called the degree of the vertex $v$ in $G$ and is denoted by $d_{G}(v)$, or simply $d(v)$. A subset $ S \subseteq V$ of vertices in a graph is called a dominating set if every $ v\in V$ is either an element of $S$ or is adjacent to an element of $S$ \cite{Tha}. The domination number $\gamma(G) $ is the minimum cardinality of a dominating set of $G$. \\

  An Italian dominating function (IDF) of a graph $G$ is a function $ f: V(G) \rightarrow \{0,1,2\} $ satisfying the condition that for every $ v\in V $ with $ f(v) = 0$ $ \sum_{ u\in N(v)} f(u) \geq 2 $. i.e., either $v$ is adjacent to a vertex $u$ with $ f(u) = 2 $ or to at least two vertices $x$ and $y$ with $ f(x) = f(y) = 1. $ The weight of an Italian dominating function is $f(V)=\sum_{u \in V}f(u)$. The Italian domination number $ \gamma_I(G) $ is the minimum weight of an Italian dominating function. The Italian dominating function with weight $ \gamma_I(G) $ is called a $ \gamma_I $-function \cite{Mustha}. Also the sum of the weights of the vertices of $ H $ is denoted by $ f(H) $, where $ H $ is any subgraph of $ G $. i.e., $f(H)= \sum_{u \in V(H)}f(u)$. \\

  An Italian dominating function is a perfect Italian dominating function, abbreviated PID-function, on $G$ if for every vertex $ v \in V(G) $ with $ f(v) = 0 $ the total weight assigned by $f$ to the neighbours of $ v $ is exactly 2, i.e., all the neighbours of $u$ are assigned the weight 0 by $f$ except for exactly one vertex $v$ for which $ f(v) = 2 $ or  for exactly two vertices $v$ and $w$ for which $ f(v) = f(w) = 1 $. The weight of a PID- function is $f(V)=\sum_{u \in V(G)}f(u)$. The perfect Italian domination number of $G$, denoted by $ \gamma^{p}_{I}(G), $ is the minimum weight of a PID-function of $G$ \cite{Twh}.\\

 The study of Italian domination was introduced by M. Chellai, T. W. Haynes, S. T. Hedetniemi and A. A. Mcrae in \cite{Mustha}. It is proved that if $ G $ is a connected graph of order $ n \geq 3 $, then $ \gamma_I(G) \leq \frac{3n}{4} $. If $ G $ has minimum degree at least 2, then $ \gamma_I(G) \leq \frac{2n}{3}$. The connected graphs achieving equality in these bounds were studied in \cite{Hay}. Nordhaus-Gaddum inequalities for Italian domination number is also proved in \cite{Hay}. Italian domination in trees was discussed in \cite{Hen}. In \cite{Haji} the authors studied the graphs with equal domination number and Italian domination number. A. Poureidi and N. J. Rad showed that the associated decision problem for Italian domination is NP-complete even when restricted to planar graphs. They gave a linear algorithm that computes the Italian domination number of a given unicyclic graph \cite{Pour}. Italian domination number of generalized Petersen graphs $ P(n,3) $ was studied in \cite{Gao}. A bagging approach and a partitioning approach  to investigate the Italian domination number of cartesian product of cycles and paths $ C_n \Box P_n $ was done in \cite{Gao2}. They also determined the exact value of the Italian domination number of $ C_n \Box P_3 $ and $ C_3 \Box P_n $ and bounds for $\gamma_I( C_n \Box P_m) $ for $ m,n \geq 4 $.   Corona operator on Italian domination was studied in \cite{Jis}.

\section{Sierpi\'{n}ski Graphs}
 Let $ G=(V,E) $ be a non-empty graph of order $ n\geq 2, $ and $t$ a positive integer. Let $ V^t $ be the set of words of length $t$ on alphabet $V$. The letters of a word $u$ of length $t$ are denoted by $ u_1u_2...u_t$.
 The graph $ S(K_n,t), t\geq 1, $ ($ S(t,n) $ in their notation) was introduced by Klav\v{z}ar and Milutinovi\'{c} in \cite{SKalv}.  $ S(K_n,t)$ has vertex set $ V^t $ and $ \{u,v\} $ is an edge if and only if there exists $ i\in \{1,2,...,t\} $ such that:\\
 (i) $ u_j = v_j,$ if $j<i; $ (ii) $ u_i \neq v_i; $ (iii) $ u_j = v_i$ and $ v_j = u_i,$ if $ j>i. $ \\

 Later, those graphs have been called Sierpi\'{n}ski graphs in \cite{Klav}. Figure 1 and Figure 2  illustrate  Sierpi\'{n}ski graphs $ S(K_5,1),\ S(K_5,2) $ and $ S(K_5,3) $. The vertices of the form $ uuu...u $ are called extreme vertices of $ S(K_n,t) $. Note that for any $ t \geq 2 $, $ S(K_n,t) $ has $ n $ extreme vertices and the extreme vertex $ uuu...u $ has degree $n-1$.\\

 \begin{figure}[h] \label{Fig1}
 	\centering
 	\includegraphics[width=8.5cm]{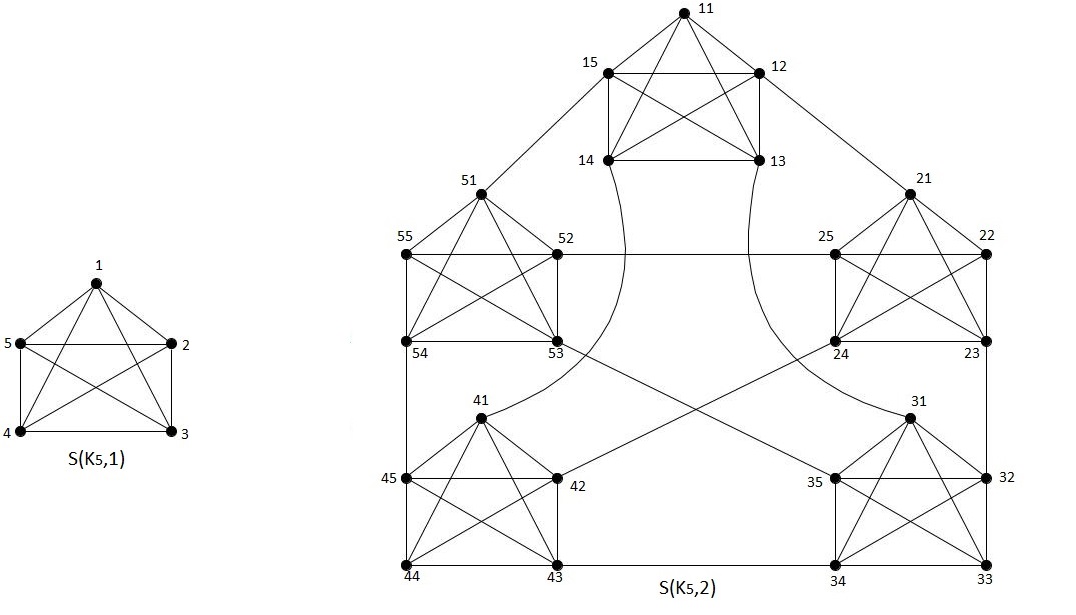}\\
 	\caption{Sierpi\'{n}ski graphs $S(K_5,t), t=1,2$.\label{overflow} }	
\end{figure}

\begin{figure}[h] \label{Fig2}
	\centering
 	\includegraphics[width=10.5cm]{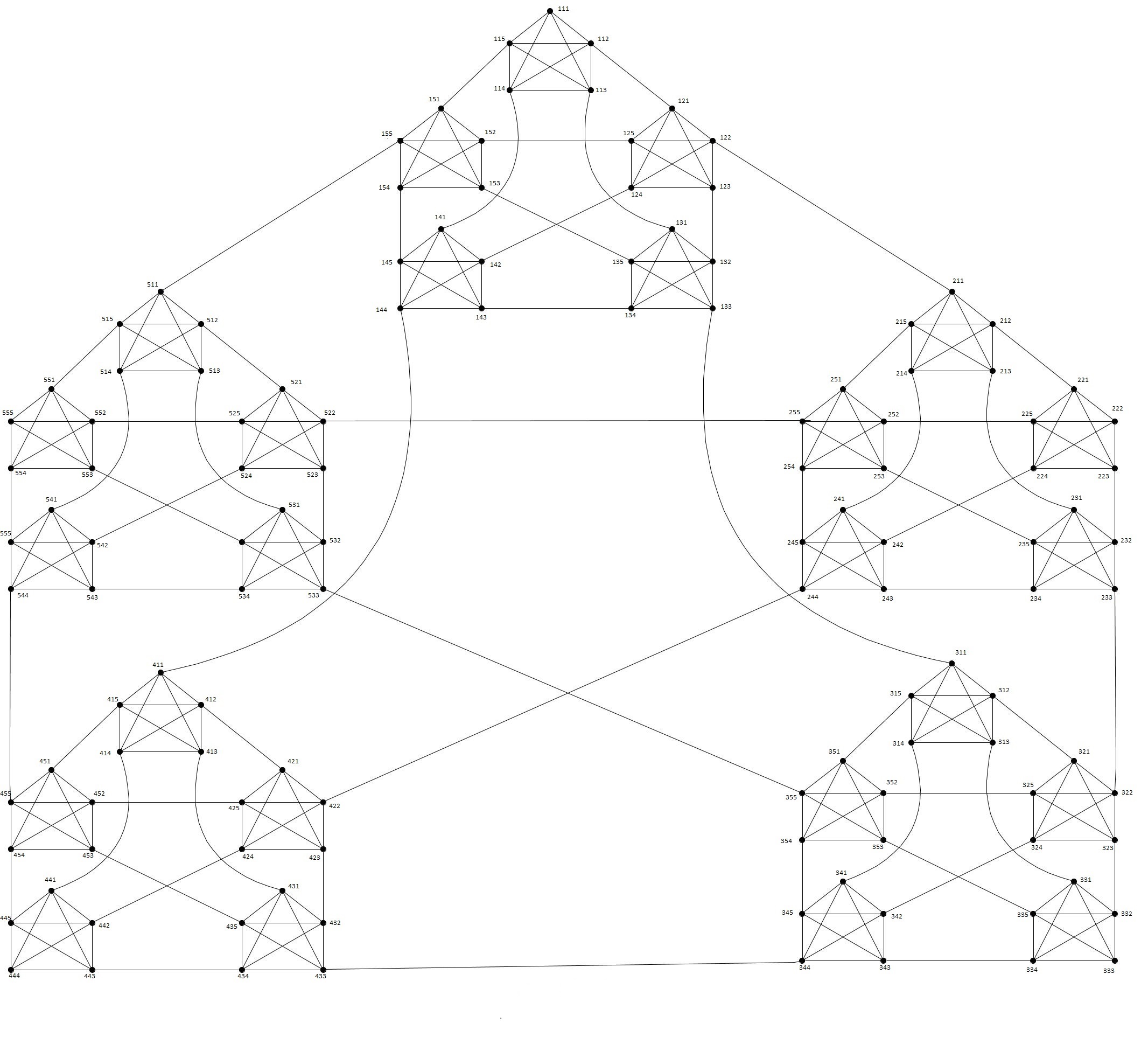}\\
 	\caption{Sierpi\'{n}ski graph $S(K_5,3)$.  }
 \end{figure}
  $S(K_n, t)$ can be constructed
recursively from $K_n$ as follows: Take  $K_n$ as $S(K_n, 1)$. To construct $S(K_n, t)$ for $t\geq 2$, take $n$
copies of  $S(K_{n}, t - 1)$ and add the letter $x$ at the beginning of each label of the vertices belonging to the copy of $S(K_n, t-1)$ corresponding to $x$. Then  add an edge between vertex $xyy . . . y$ and vertex $yxx . . . x$. Domination number, Roman domination number and double Roman domination number of Sierpi\'{n}ski graphs were studied in \cite{Ram}, \cite{Anu} and  \cite{Liu}. To understand more about Sierpi\'{n}ski graphs readers are requested to go through the survey paper \cite{Hin} appeared in $2017$.\\

  For any graph theoretic terminology and notations not mentioned here, the readers may refer to \cite{Bal}.
 \section{Main Results}
 In this section, the exact value of the Italian domination number  of the Sierpi\'{n}ski graph, $ \gamma_I(S(K_n,t))$ is given. For $ n=2 $, $ S(K_n,t) = P_{2^t} $ and in \cite{Ali} it has been proved that $ \gamma_I(P_n)= \lceil \frac{n+1}{2}\rceil $. For $ t=2$ and $ n\geq 2 $ we already have the following theorem.\\
 \begin{thm} \label{thm3} \cite{Jism}
 	The Italian domination number of the Sierpinski graph $S(K_n, 2)$ is $ 2n-1.$
 \end{thm}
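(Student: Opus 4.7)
The plan is to prove the matching bounds $\gamma_I(S(K_n,2))\leq 2n-1$ and $\gamma_I(S(K_n,2))\geq 2n-1$ separately. Throughout, I label the vertex set as $\{ij:1\leq i,j\leq n\}$, so that $C_i=\{ij:1\leq j\leq n\}$ is the $i$-th copy of $K_n$, the extreme vertex of $C_i$ is $ii$, and every non-extreme vertex $ij$ has exactly one external neighbour, namely $ji\in C_j$; in particular, $ii$ has no external neighbour.

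For the upper bound I exhibit an explicit IDF. Define $f(11)=1$, $f(jj)=f(j1)=1$ for $j=2,\dots,n$, and $f\equiv 0$ elsewhere; the total weight is $1+2(n-1)=2n-1$. A short case check verifies the Italian condition: inside $C_1$, any zero-weight vertex $1j$ with $j\neq 1$ collects $f(11)=1$ from inside $C_1$ and $f(j1)=1$ through its external neighbour, summing to $2$; inside $C_j$ for $j\geq 2$, every zero-weight vertex receives $f(jj)+f(j1)=2$ from its in-copy neighbours, since $C_j$ is a clique.

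For the lower bound let $f$ be any IDF and set $w_i:=f(C_i)$. Since $ii$ has no external neighbour, its Italian condition is forced inside $C_i$: if $f(ii)=0$ then $w_i\geq \sum_{k\neq i}f(ik)\geq 2$, while $f(ii)\geq 1$ gives $w_i\geq 1$ trivially. Thus $w_i\geq 1$ always, with equality only when $f(ii)=1$ and $f(ik)=0$ for every $k\neq i$. In this equality case each non-extreme vertex $ij$ sees only $f(ii)=1$ inside $C_i$, so its unique external neighbour $ji$ must satisfy $f(ji)\geq 1$. I then argue that at most one copy can achieve $w_i=1$: if two distinct copies $C_i,C_{i'}$ both had weight $1$, then from $C_i$ we would need $f(i'i)\geq 1$, contradicting the fact that $C_{i'}$'s unit weight sits at $i'i'\neq i'i$. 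Therefore $f(V)=\sum_i w_i\geq 1+2(n-1)=2n-1$, matching the upper bound.

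The main obstacle is the copy-by-copy bookkeeping in the lower bound: one must carefully pin down that $w_i=1$ forces the weight onto the extreme vertex $ii$, and then exploit the uniqueness of the external partner $ji$ to derive the two-copy contradiction. Once that is in place, both bounds coincide and give $\gamma_I(S(K_n,2))=2n-1$.
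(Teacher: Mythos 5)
Your argument is correct, and it is worth noting that this paper itself does not prove the statement: Theorem \ref{thm3} is quoted from the communicated paper \cite{Jism}, so there is no in-text proof to compare against line by line. Your proof stands on its own. The upper-bound construction you use (weight $1$ on $v_iv_i$ and on $v_iv_1$, with $v_1v_1$ counted once) is exactly the function the paper later employs in Case 1 of its perfect Italian domination theorem for $t=2$, and your verification of the Italian condition there is sound. Your lower bound is in the same spirit as the paper's copy-by-copy analysis for $S(K_n,3)$ in Theorem \ref{thm1}, but is cleaner in this smaller case: you correctly observe that the extreme vertex $ii$ has all its neighbours inside $C_i$, so $w_i=1$ forces $f(ii)=1$ with zeros elsewhere in $C_i$, and then the uniqueness of the external partner $ji$ of each $ij$ yields the contradiction when two copies both have weight $1$, giving $f(V)\geq 1+2(n-1)=2n-1$. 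Both bounds match, so the proof is complete; no gaps.
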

For $ t=3 $, we prove the following theorem, which can then be generalized to $ t > 3. $
 \begin{thm} \label{thm1}
The Italian domination number of the Sierpi\'{n}ski graph $S(K_n, 3)$ is $ 2n(n-1) $, for $ n\geq 3 $.
\end{thm}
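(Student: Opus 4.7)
The plan is to show $\gamma_I(S(K_n,3))=2n(n-1)$ by constructing an IDF of that weight (upper bound) and by a matching structural lower bound. Since $S(K_n,3)$ is built from $n$ copies of $S(K_n,2)$ joined by the $\binom{n}{2}$ inter-copy edges $xyy$\,--\,$yxx$, and since $\gamma_I(S(K_n,2))=2n-1$ by Theorem~\ref{thm3}, placing an optimal IDF on each copy independently would already yield $n(2n-1)=2n(n-1)+n$. Both directions of the proof therefore revolve around showing that the inter-copy edges allow a saving of exactly $n$ units.

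For the upper bound, I plan to start from the copy-by-copy template that puts weight $2$ on the $n-1$ vertices $xax$, $a\ne x$, in each copy $x$. This costs $2(n-1)$ per copy and immediately dominates every sub-$K_n$ $Q_{xa}$ ($a\ne x$) through the weight-$2$ vertex $xax$, and dominates every non-extreme vertex $xxa$ ($a\ne x$) of $Q_{xx}$ through the inter-sub edge $xxa$\,--\,$xax$. The only vertex left uncovered is the extreme vertex $xxx$. I would therefore modify the scheme locally near each extreme vertex---for example, by redistributing the weight at some $xa_0x$ into two weight-$1$ vertices adjacent to $xxx$ and using the inter-copy edge to copy $a_0$ to absorb the resulting domination loss in $Q_{xa_0}$---so that the total weight remains $2(n-1)$ per copy, giving $2n(n-1)$ overall. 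Verification of the IDF property is then a routine case check over the vertex types $xxx$, $xxa$, $xax$, $xaa$ and $xab$ $(a,b\ne x,\,a\ne b)$ in each copy.

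For the lower bound, let $f$ be any IDF and set $w_{xy}=\sum_{z\in[n]}f(xyz)$, so that $f(V)=\sum_{x,y}w_{xy}$. A per-sub-$K_n$ analysis gives three regimes: if $w_{xy}\ge 2$, nothing further is required; if $w_{xy}=1$, each of the $n-1$ weight-$0$ vertices of $Q_{xy}$ forces its unique external neighbour to carry weight $\ge 1$; if $w_{xy}=0$, each of the $n$ vertices of $Q_{xy}$ forces its unique external neighbour to carry weight exactly $2$. Because the extreme vertex $xxx$ has no external edge, we also obtain $w_{xx}\ge 1$ in every copy (with $w_{xx}\ge 2$ when $f(xxx)=0$). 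Aggregating these constraints via a charging scheme that credits each sub-$K_n$ with its own weight together with the external weight it forces on its neighbours should yield $\sum_{x,y}w_{xy}\ge 2n(n-1)$. The main obstacle is precisely this lower bound: the generic degree bound $\gamma_I(G)\ge 2|V|/(\Delta+2)$ only gives $2n^3/(n+2)\approx 2n^2-4n$, off by $2n$, so the argument must exploit both the $n$ extreme vertices of reduced degree and the rigid structure forced by sub-$K_n$'s of weight $0$ or $1$ in order to extract the missing $2n$ cleanly.
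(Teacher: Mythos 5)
Your overall strategy (an explicit weight-$2n(n-1)$ construction plus a structural lower bound exploiting that every vertex has at most one neighbour outside its sub-$K_n$ and that extreme vertices have none) is the same as the paper's, but in both halves the decisive step is missing or wrong. For the upper bound, the specific budget-neutral patch you propose does not work: if you delete the weight $2$ at $x a_0 x$ and place two weight-$1$'s on neighbours of $xxx$ (necessarily vertices $xxa$, $xxb$ of $Q_{xx}$), then \emph{all} $n-1$ remaining vertices of $Q_{xa_0}$ lose their only source of weight. Their external neighbours $x z a_0$ (for $z\neq a_0,x$) lie in other sub-copies and carry weight $0$ under your template, and only the single vertex $xa_0a_0$ has an inter-copy neighbour ($a_0xx$), so "using the inter-copy edge to copy $a_0$" can rescue at most one of them; even $xa_0x$ itself then sees total weight at most $1$. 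One cannot repair this locally while keeping $2(n-1)$ per copy; a genuinely different assignment is needed. (The paper's construction spreads weight $1$ over two vertices per sub-copy, namely $v_iv_jv_{i-1}$ and $v_iv_jv_{i+1}$, with the cases $j=i\pm1$ handled by the swap edges $v_iv_jv_k \sim v_iv_kv_j$ and the inter-copy edges, and then verifies the IDF property directly.)

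For the lower bound, the inequalities you list (at most one weight-$0$ sub-$K_n$ forces weight $2$ on its external neighbours, a weight-$1$ sub-$K_n$ forces weight $\geq 1$ on the external neighbours of its zero vertices, and $w_{xx}\geq 1$) are correct, but the sentence "aggregating these constraints via a charging scheme should yield $\sum w_{xy}\geq 2n(n-1)$" is precisely the theorem, not a proof of it; as stated the charging even double-counts, since the "external weight forced" by one sub-$K_n$ is part of another sub-$K_n$'s own weight $w_{xz}$ (or of a different copy's weight, via the edge $xyy \sim yxx$), so credits must be assigned so that no unit of weight is claimed twice. The paper closes this gap by proving the per-copy inequality $f(S_i(K_n,2))\geq 2n-2$ for every copy of $S(K_n,2)$: if some copy had weight $\leq 2n-3$, then either some sub-$K_n$ $Q_{pa}$ has weight $0$, in which case every vertex $paz$ ($z\neq a$) forces $f(pza)=2$ and the other $n-1$ sub-copies already carry weight $\geq 2(n-1)$, a contradiction; or no sub-copy has weight $0$ and at least three have weight $1$, in which case the extreme vertex $ppp$ (which has no external neighbour) and the swap edges among the weight-$1$ sub-copies force $f(ppa),f(ppb),f(ppc)\geq 1$ or an undominated vertex, again a contradiction. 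You have correctly identified where the difficulty lies, but that argument (or an equivalent non-double-counting charging) still has to be supplied before the proof is complete.
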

\begin{proof}
	Let $ V(K_n) = \{ v_1,v_2,...,v_n\} $. Then $ S(K_n,3)  $ has the  vertex set $\{ v_iv_jv_k: i,j,k \in \{1,2,...,n\} \}$. Note that by the definition, there are three types of adjacencies in $S(K_n,3)$.
	 \begin{enumerate}
		\item [$\bullet$] $ \{ \{v_iv_jv_j, v_jv_iv_i\}: i,j \in \{1,2,...,n\}, i\neq j \} $.
		\item [$\bullet$ ]$ \{ \{v_iv_jv_k, v_iv_kv_j \}: i,j,k \in \{1,2,...,n\}, j\neq k \} $.
		\item [$\bullet$] $\{ \{v_iv_jv_k, v_iv_jv_l\}: i,j,k,l \in \{1,2,...,n\}, k\neq l \} $.
	\end{enumerate}
  Let $ S_i(K_n,2) $ denote the $ i^{th} $ copy of $ S(K_n,2) $ in $ S(K_n,3) $ and $ S_{ij}(K_n) $ denote the $ j^{th} $ copy of $ K_n $ in $ S_i(K_n,2) $ for $ i,j=1,2,...,n $.  Define an Italian dominating function on $ S(K_n,3) $ as follows.\\
	\[
	f(v)=
	\begin{cases}
	1;\ v=v_iv_jv_{i-1},\  i,j \in \{1,2,...,n\}, and \\ \hspace{6mm} v=v_iv_jv_{i+1}, \ i,j \in \{1,2,...,n\},\ j \neq i-1,i+1,\\
	0;\ otherwise.
	\end{cases}
	\]
	
	For any $ f(v_iv_jv_k)=0 $, $ v_iv_jv_k $ is adjacent to $ v_iv_jv_{i-1} $ and $ v_iv_jv_{i+1}$, for $k \neq i - 1$ and $i+1$.  By the definition of $f$, $ f(v_iv_jv_{i-1}) =1$ and $ f(v_iv_jv_{i+1})=1 $, so that $v_iv_jv_k$ is Italian dominated. Note that, $f(v_iv_jv_{i-1}) = 1$, so that $f(v_iv_jv_k) = 0$ implies $k \neq i - 1$. If $ k=i+1 $, then $f(v_iv_jv_k)=0$ implies $j= i-1 $ or $ i+1 $. If $ j=i-1 $ the vertex $ v_iv_{i-1}v_{i+1} $ is adjacent to the vertex $ v_iv_{i+1}v_{i-1} $ and $ f(v_iv_{i+1}v_{i-1})=1 $. If $ j=i+1 $ the vertex $ v_iv_{i+1}v_{i+1} $ is adjacent to $ v_{i+1}v_iv_i $ and $ f(v_{i+1}v_iv_i)=1 $. Therefore, $ f $ is an IDF of $ S(K_n,3) $ and $ f(V)=2n(n-1). $ So $ \gamma_I(S(K_n,3)) \leq 2n(n-1). $\\
	
	To prove the reverse inequality, we claim that  for any $\gamma_I$-function $f$ of $S(K_n,3)$,  $f( S_i(K_n,2))\geq 2n-2 $, for $i=1,2,\ldots,n$. If possible assume that there exists an $\gamma_I$-function $ f $ such that weight of a copy, say $ p^{th} $ copy, of $ S(K_n,2) $ is at most $ 2n-3 $. Then two cases arise.
	\begin{enumerate}
		\item [1.] There exists a copy, say $ S_{pa}(K_n) $, of $ K_n $ in $ S_p(K_n,2) $ with weight 0,  another copy, say $ S_{pb}(K_n) $, of $ K_n $ with weight 1   and all other copies of $ K_n $ with weight 2.
		\item [2.] There exist three copies, say $ S_{pa}(K_n) $, $ S_{pb}(K_n) $, $ S_{pc}(K_n) $, of $ K_n $ with weight 1  and all other copies of $ K_n $ with weight 2.
	\end{enumerate}
		\ni {\bf \emph{Case 1:}}\\
		
		\ni Let $ f(S_{pa}(K_n))=0 $, $ f(S_{pb}(K_n))=1 $ and $ f(S_{pj}(K_n))=2, $ for all $ j=1,2,...,n $ and $ j \neq a,b $. Since $ S_{pa}(K_n) $ has weight 0, all the vertices $ v_pv_av_j $ has weight 0. To Italian dominate these vertices, we must assign weight 2 to the vertices $ v_pv_jv_a $ for all $ j=1,2,...,n $. i.e., all the remaining $ n-1 $ copies of $ K_n $ in $ S_p(K_n,2) $ should have weight 2, which contradicts the fact that $ f(S_{pb}(K_n))=1$.\\
	
	\ni {\bf \emph{Case 2:}} \\
	
	\ni Let $ f(S_{pa}(K_n)) = f(S_{pb}(K_n))=f(S_{pc}(K_n))= 1 $ and $ f(S_{pj}(K_n))=2, $ for all $ j=1,2,...,n $ and $ j \neq a,b,c $. If $ a=p $ then since in $ S_{pp}(K_n) $ the vertex $ v_pv_pv_p $ is not adjacent to any vertex outside $ S_{pp}(K_n) $, it must be assigned the weight 1. Therefore, to Italian dominate $ v_pv_pv_j $, $ v_pv_jv_p $ must be assigned the weight 1, for all $ j \in \{1,2,...,n\}, j\neq p. $ In particular, $ v_pv_bv_p $ and $ v_pv_cv_p $ in $ S_{pb}(K_n) $ and $ S_{pc}(K_n) $ respectively are assigned the weight 1 each. Therefore, all the remaining vertices in $ S_{pb}(K_n) $ and $ S_{pc}(K_n) $ must be assigned weight 0. But then, $ v_pv_bv_c $ and $ v_pv_cv_b $ cannot be Italian dominated. Therefore $ a \neq p. $ Similarly we can prove that $ b,c \neq p. $ \\
	
	Since, $ f(S_{pa}(K_n)) = f(S_{pb}(K_n)) = 1,$ either $ f(v_pv_av_b) $ or $ f(v_pv_bv_a)=1 $. Without loss of generality let $ f(v_pv_av_b) = 1 $. Therefore, to Italian dominate $ v_pv_av_c $, $ f(v_pv_cv_a)=1 $ and in turn to Italian dominate $ v_pv_cv_b, $ $ f(v_pv_bv_c)=1. $ But then, to Italian dominate $ v_pv_av_p,\ v_pv_bv_p,\ v_pv_cv_p, $ $ f(v_pv_pv_a)=f(v_pv_pv_b)= f(v_pv_pv_c)=1 $ which contradicts the fact that $ f(S_{pp}(K_n))=2. $\\
	
	 Therefore, there does not exist a copy of $ S(K_n,2) $ with weight less than $ 2n-2 $ in $ S(K_n,3) $ and hence 	$ \gamma_I(S(K_n,3)) \geq 2n(n-1). $ Hence the theorem.
\end{proof}

\ni \textbf{Remark:} It is clear from the proof of the above theorem that for any $ \gamma_I$-function $f$ of $S(K_n,3)$, $ f(S_i(K_n,2))=2n-2 $, for each $ i=1,2,...,n. $ Also, in each $S_i(K_n,2)$, $f(S_{ij}(K_n))=1$ for exactly two $j$'s with $j \neq i$. (For all other $j$'s $f(S_{ij}(K_n)) =2$). For definiteness, let $f(S_{il}(K_n))= f(S_{im}(K_n)) =1 $. Note that, either all the vertices of $ S_{il}(K_n)$ or all the vertices of $ S_{im}(K_n) $ are Italian dominated by the vertices of $S_i(K_n,2)$. Let it be the vertices of $ S_{il}(K_n)$. Then to dominate the vertices of  $ S_{im}(K_n) $, we have to give non-zero weight to the vertex $v_mv_iv_i$ in $ S_{mi}(K_n)$. This process is cyclically repeated in the sense that, a vertex from $ S_i(K_n,2) $ will be Italian dominated by a vertex outside $ V(S_i(K_n,2)) $ and another vertex from $ S_i(K_n,2) $ will contribute to Italian dominate a vertex outside $ V(S_i(K_n,2)) $. Also, this process gives a partition of $ K_n $ into vertex disjoint cycles. Also note that, in $S_i(K_n,2) $, if we assign weight 1 to the extreme vertex of $S(K_n,3)$, i.e., $ v_iv_iv_i $, then with $ f(S_i(K_n,2))=2n-2 $, none of the vertices of $ S_i(K_n,2) $ can contribute to Italian dominate a vertex outside $ V(S_i(K_n,2)) $ and two of the vertices of $S_i(K_n,2)$ will be Italian dominated by vertices outside $ S_i(K_n,2) $, which will in turn increase the total weight of $ f $. So we can conclude that in any $ \gamma_I $-function of $ S(K_n,3),$ the weight of any extreme vertex is zero. \\

From the above remark we can arrive at the following corollary.

\begin{cor} \label{cor1}
	For every positive integer $n, t \geq 3$, the Italian domination number of the Sierpi\'{n}ski graph $ S(K_n,t) $ is $\gamma_I(S(K_n,t)) = n^{t-2}(2n-2). $
\end{cor}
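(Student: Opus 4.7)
The plan is to exploit the recursive structure of $S(K_n,t)$: it decomposes into $n^{t-3}$ vertex-disjoint copies of $S(K_n,3)$, glued together by bridge edges inserted at every recursive level. A preliminary observation, immediate from the bridging rule $w y y\cdots y \sim w' z z\cdots z$, is that every bridge edge in $S(K_n,t)$ joins two extreme vertices of the sub-copies of $S(K_n,3)$ they belong to.

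For the upper bound I would place the $\gamma_I$-function constructed in Theorem \ref{thm1} on each of the $n^{t-3}$ copies of $S(K_n,3)$. By the Remark, every extreme vertex of every copy receives weight $0$ and is Italian-dominated by vertices within its own copy. Since every bridge joins two such weight-$0$ vertices, each already Italian-dominated internally, attaching a further weight-$0$ neighbor through a bridge cannot violate the IDF condition. The combined function is therefore an IDF of $S(K_n,t)$ of total weight $n^{t-3}\cdot 2n(n-1)=n^{t-2}(2n-2)$.

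For the lower bound I would induct on $t$, with base case $t=3$ supplied by Theorem \ref{thm1}. Assume the formula holds for $t-1$, together with the inductive analog of the Remark that extreme vertices carry weight $0$ in any $\gamma_I$-function of $S(K_n,t-1)$. Let $f$ be any $\gamma_I$-function of $S(K_n,t)$ and split $S(K_n,t)$ into its $n$ top-level copies $S_1,\dots,S_n$ of $S(K_n,t-1)$; the goal is to prove $f(S_i)\ge n^{t-3}(2n-2)$ for every $i$, since summing then yields $\gamma_I(S(K_n,t))\ge n^{t-2}(2n-2)$. Whenever $f|_{S_i}$ is already an IDF of $S_i$, the inductive hypothesis delivers the inequality at once.

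The main obstacle is that $f|_{S_i}$ need not itself be an IDF of $S_i$: each of the $n-1$ outgoing extreme vertices $v_i v_j v_j\cdots v_j$ (with $j\ne i$) might carry weight $0$ and be Italian-dominated with help from its bridge partner $v_j v_i v_i\cdots v_i\in S_j$. I would handle this by adapting the Case~1/Case~2 dichotomy from the proof of Theorem \ref{thm1}: any inside shortfall at an outgoing extreme vertex of $S_i$ forces positive weight on the bridge partner in $S_j$; both endpoints of a bridge cannot simultaneously be weight-$0$ failures; and the inductive version of the Remark forbids positive weight at extreme vertices once $S_j$ is optimally filled. The cyclic partition of $K_n$ into vertex-disjoint cycles described at the end of the Remark extends upward through the recursion, and a charging argument based on it matches every local deficit in one copy with a corresponding excess in a neighboring copy. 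This yields $f(S_i)\ge n^{t-3}(2n-2)$ for every $i$, closing the induction.
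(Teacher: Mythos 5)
Your upper bound is complete and correct, and it is essentially what the paper intends when it appeals to the Remark: every edge of $S(K_n,t)$ joining two distinct sub-copies of $S(K_n,3)$ has the form $wab b\cdots b \sim wba a\cdots a$ with the difference at a position $i\le t-3$, so both endpoints have constant last three letters and are extreme vertices of their copies; and the function of Theorem~\ref{thm1} assigns $0$ to each extreme vertex $v_iv_iv_i$ while dominating it by $v_iv_iv_{i-1}$ and $v_iv_iv_{i+1}$ inside the same copy. Pasting that function onto the $n^{t-3}$ copies therefore gives an IDF of weight $n^{t-2}(2n-2)$, as you say.

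The genuine gap is the lower bound, and your induction does not close as described. The inductive hypothesis you invoke --- the value of $\gamma_I(S(K_n,t-1))$ together with an ``inductive analog of the Remark'' that extreme vertices carry weight $0$ --- is a statement about functions that are simultaneously Italian dominating and of minimum weight on $S(K_n,t-1)$, whereas the restriction $f|_{S_i}$ is in general neither: every non-extreme vertex of $S_i$ has all its neighbours inside $S_i$, but up to $n-1$ extreme vertices of $S_i$ may be left with internal neighbour-weight $0$ or $1$ and be rescued by positive weight on their unique bridge partner in another copy. What the induction actually needs is a strengthened statement quantifying how much weight can be saved inside a copy when a prescribed subset of its extreme vertices is exempted from domination, together with an accounting showing that the forced weight on the bridge partners always offsets those savings; your proposal only asserts that ``a charging argument matches every local deficit with a corresponding excess,'' and the one concrete observation you give (two weight-$0$ bridge endpoints cannot help each other) does not bound the savings. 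Moreover, the analog of the Remark for $t-1>3$ is itself unproven and would have to be carried through the same induction. There is also an internal inconsistency: a charging scheme that pays for a deficit in $S_i$ with excess located in a neighbouring copy $S_j$ can only bound the total weight, not each $f(S_i)$ separately, yet your conclusion is the per-copy inequality $f(S_i)\ge n^{t-3}(2n-2)$. (In fairness, the paper itself offers nothing beyond a one-line appeal to the Remark at this point, so this compensation argument is a gap you would have to fill rather than one you failed to reproduce.)
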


\section{Perfect Italian Domination Number}
In this section, the exact value of the perfect Italian domination number of the Sierpinski graph, $ \gamma_I^p(S(K_n,t))$  is given. For $ n=2 $, $ S(K_n,t) = P_{2^t} $ and in \cite{Lau} it has been proved that $ \gamma_I^p(P_n)= \lceil \frac{n+1}{2}\rceil $. For $ n \geq 3 $, we have the following theorem.\\

\begin{thm}
	The perfect Italian domination number of the Sierpi\'{n}ski graph, for $ n \geq 3 $ is
	\[
	\gamma_I^p(S(K_n,t)) =
	\begin{cases}
      2n-1;\ for\ \ t=2, \\
      n^{t-2}(2n-2);\ for\ t \geq 3.
	\end{cases}
	 	\]
	
\end{thm}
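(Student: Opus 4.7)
The plan is to combine the trivial inequality $\gamma_I^p(G)\ge\gamma_I(G)$, valid because every PID-function is in particular an IDF, with an explicit PID-function of the required weight. Theorem~\ref{thm3} and Corollary~\ref{cor1} then supply the lower bounds $\gamma_I^p(S(K_n,2))\ge 2n-1$ and $\gamma_I^p(S(K_n,t))\ge n^{t-2}(2n-2)$ for $t\ge 3$, so only the matching upper bounds need to be constructed.

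For the case $t=2$, I would define $f$ by setting $f(v_iv_1)=1$ for every $i\in\{1,\dots,n\}$, then pair the indices in $\{2,\dots,n\}$ into disjoint $2$-subsets $(p,q)$ and declare $f(v_pv_q)=f(v_qv_p)=1$ for each pair; when $n$ is even exactly one index $r$ is left unpaired, and I handle it by additionally setting $f(v_rv_r)=1$. The total weight is $n+2\lfloor(n-1)/2\rfloor+[n\text{ is even}]=2n-1$. PID verification is a short case analysis: each weight-$0$ vertex $v_1v_j$ is dominated by $v_1v_1$ inside $S_1(K_n)$ and by $v_jv_1$ via its Type~1 edge for a total of exactly~$2$; each weight-$0$ vertex $v_iv_j$ with $i\ge 2$ sees the two weight-$1$ vertices of $S_i(K_n)$ inside its copy, and the involutive nature of the pairing forces the Type~1 partner $v_jv_i$ to carry weight $0$; the extreme vertex $v_iv_i$ is either weighted itself or sees its two in-copy weight-$1$ vertices, summing to $2$ in every case.

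For $t\ge 3$, my approach is to verify directly that the IDF constructed in the proof of Theorem~\ref{thm1} is itself a PID-function, and then to appeal to the recursive/cyclic pattern underlying Corollary~\ref{cor1} to lift the construction to arbitrary $t\ge 3$. For $t=3$ this reduces to checking that, for each weight-$0$ vertex $v_iv_jv_k$, the sum of neighbour weights equals exactly~$2$ rather than merely $\ge 2$. I would organise the verification into three families: the extreme vertex $v_iv_iv_i$, whose two non-zero neighbours $v_iv_iv_{i\pm 1}$ each carry weight~$1$; the diagonal vertices $v_iv_jv_j$ with $j\ne i$; and the generic non-diagonal vertex $v_iv_jv_k$ with $j\ne k$, further sub-split according to whether $k$ or $j$ lies in $\{i-1,i+1\}$. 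For $t>3$ the same local pattern is placed on each $S_I(K_n,2)$ copy, and the inter-copy cyclic dependencies highlighted in the Remark following Theorem~\ref{thm1} (together with the fact that every extreme vertex of $S(K_n,t-1)$ receives weight $0$) automatically satisfy the exactness condition required by PID.

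The main obstacle is the pair of asymmetric sub-cases in which a weight-$0$ vertex of $S_{ij}(K_n)$ sees only one weight-$1$ neighbour inside its surrounding $K_n$ copy. For $v_iv_{i+1}v_{i+1}$ the shortfall is supplied by the Type~1 edge to $v_{i+1}v_iv_i$ (which belongs to the first family in the definition of $f$ because $i=(i+1)-1$), while for $v_iv_{i-1}v_{i+1}$ it is supplied by the Type~2 swap edge to $v_iv_{i+1}v_{i-1}$. Showing that each such deficit is made up by exactly one compensating edge, never zero and never two, is the delicate technical heart of the argument, and the analogous cross-compensations must be traced carefully through the recursion when $t>3$.
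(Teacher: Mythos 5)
Your proposal is correct and follows essentially the same route as the paper: the lower bounds come from $\gamma_I^p(G)\ge\gamma_I(G)$ combined with Theorem~\ref{thm3} and Corollary~\ref{cor1}, and the upper bound for $t\ge 3$ is obtained, exactly as in the paper, by checking that the IDF constructed in the proof of Theorem~\ref{thm1} (replicated through the recursion, with extreme vertices weighted $0$) is already a perfect Italian dominating function --- your case verification is in fact more detailed than the paper's one-line assertion. The only difference is that for $t=2$ you build a different explicit PID-function of weight $2n-1$ (the vertices $v_iv_1$ plus a pairing of $\{2,\dots,n\}$, with $v_rv_r$ for a leftover index) instead of the paper's simpler assignment $f(v_iv_i)=f(v_iv_1)=1$ for all $i$; both constructions are valid and of the same weight.
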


\begin{proof}
	\ni{\bf \emph{Case 1:}} $ t=2 $.\\
	
\ni	Let $ V(K_n)= \{v_1,v_2,...,v_n\} $. Then $ S(K_n,2) $ is a graph with vertex set $ \{v_iv_j: i,j \in \{1,2,3,...,n\} \} $ and edge set $\{\{ v_iv_j,v_jv_i\}: v_i,v_j \in V, i\neq j \} \cup \{ \{ v_iv_j,v_iv_k \}: v_i,v_j,v_k \in V, j\neq k\}.$ Define a perfect Italian dominating function $f$ on $ S(K_n,2) $ as follows.
	\[
	f(v)=
	\begin{cases}
	1;\ v \in \{v_iv_i,v_iv_1, i=1,2,...,n\},\\
	0;\ otherwise.
	\end{cases}
	\]
	Note that, $ v_iv_i = v_iv_1$, when $ i=1. $ Therefore, in $ f $, exactly $ 2n-1 $ vertices are assigned weight 1 and all others are assigned weight 0, so that  $ f(V)=2n-1 $. Therefore, $ \gamma_I^p(S(K_n,2)) \leq 2n-1.  $ We know that $ \gamma_I(S(K_n,2))=2n-1 $ and $\gamma_I(S(K_n,2)) \leq \gamma_I^p(S(K_n,2))$. Therefore, $ \gamma_I^p(S(K_n,2)) \leq 2n-1 $. Hence $ \gamma_I^p(S(K_n,2)) = 2n-1 $.\\
	
	\ni{\bf \emph{Case 2:}} $ t \geq 3 $.\\
	
	\ni In the proof of Theorem \ref{thm1} we have defined an Italian dominating function with the property that every vertex with weight 0 is adjacent to exactly two vertices with weight 1. Therefore, $\gamma_I^p(S(K_n,t)) \leq n^{t-2}(2n-2) $. But we know that $\gamma_I(S(K_n,t)) \leq \gamma_I^p(S(K_n,t))$ and by Corollary \ref{cor1} $ \gamma_I(S(K_n,t))= n^{t-2}(2n-2)  $. Hence, $\gamma_I^p(S(K_n,t)) = n^{t-2}(2n-2). $
\end{proof}

\section{Conclusion}
In this paper we have obtained the exact value of $\gamma_I(S(K_n,t))$ and deduced $\gamma_I^p(S(K_n,t))$.  The following problems are open and are worth investigating.\\

\ni{\bf Problem 1:}  Find exact values of $ \gamma_I(S(G,t)),  $ where $G$ is any special class of graphs like $ P_n,\ C_n,\  K_{p,q} $ or tree. \\

In \cite{Jism} it has been proved that $n^{t-2}\alpha(G)\gamma_I(G) \leq \gamma_I(S(G,t)) \leq n^{t-2}(n\gamma_I(G)- |V_2|-|E_2|) $.\\

\ni{\bf Problem 2:} Find the exact value or a better bound for $ \gamma_I(S(G,t)) $, for any graph $G$.

{}

\end{document}